\newtheorem{theorem}{Theorem}[section]
\newtheorem{lemma}{Lemma}[section]
\newtheorem{cor}{Corollary}[section]
\newtheorem{conjecture}{Conjecture}
\begin{document}

\title{Counting $\beta$-expansions and the absolute continuity of Bernoulli Convolutions}
\author{Tom Kempton}
\maketitle

\begin{abstract}\noindent We study the typical growth rate of the number of words of length $n$ which can be extended to $\beta$-expansions of $x$. In the general case we give a lower bound for the growth rate, while in the case that the Bernoulli convolution associated to parameter $\beta$ is absolutely continuous we are able to give the growth rate precisely. This gives new necessary and sufficient conditions for the absolute continuity of Bernoulli convolutions.\end{abstract}
\section{Introduction}
Given $\beta \in (1,2]$ and $x\in I_{\beta}:=\left[0,\frac{1}{\beta-1}\right]$, a $\beta$-expansion of $x$ is a sequence $(x_n)_{n=1}^{\infty}\in\{0,1\}^{\mathbb N}$ for which
\[
x=\sum_{n=1}^{\infty}x_i\beta^{-i}.
\]
For $\beta=2$ these are the familiar binary expansions, almost every $x\in[0,1]$ has a unique binary expansion but there are a countable number of $x$ which have two expansions. The situation for $\beta\in(1,2)$ is much more complicated; the set $\mathcal E_{\beta}(x)$ of $\beta$-expansions of $x\in I_{\beta}$ may be a singleton set \cite{EJH}, or have any positive integer cardinality \cite{ErdosJooMultiplicity}, but it is typically uncountable \cite{EJH, Sidorov1}. Since the work of Renyi \cite{Renyi} and Parry \cite{Parry}, there has been a great deal of interest in expansions of real numbers in non-integer bases.


Finer information on the structure of $\mathcal E_{\beta}(x)$ can be given by defining 
\[
\mathcal E^n_{\beta}(x):=\{(x_1,\cdots,x_n) \in \{0,1\}^n|\exists (x_{n+1},x_{n+2},\cdots) : x=\sum_{k=1}^{\infty}x_k\beta^{-k}\}
\]
and studying the growth of $\mathcal N_n(x;\beta):=|\mathcal E^n_{\beta}(x)|$ as $n$ increases. The maximal growth rate of $\mathcal N_n(x;\beta)$ was studied in \cite{FurstenbergKatznelson}, while the growth rate for Lebesgue almost-every $x$ was studied in \cite{FengSidorov}, where precise information was given in the case that $\beta$ is a Pisot number as well as some estimates for the general case.

This question is also linked to the question of absolute continuity of Bernoulli convolutions. For $\beta\in(1,2)$, the Bernoulli convolution $\nu_{\beta}$ is defined as the weak star limit of the measures
\[
\nu_{\beta,n}:=\frac{1}{2^n}\sum_{a_1\cdots a_n\in\{0,1\}^n} \delta_{\sum_{i=1}^n a_i\beta^{-i}},
\]
where $\delta_x$ denotes the Dirac measure supported on $\{x\}$. It is a long standing open problem to determine for which parameters $\beta$ the corresponding Bernoulli convolution is absolutely continuous; it is known that almost every $\beta\in(1,2)$ admits an absolutely continuous Bernoulli convolution \cite{SolomyakAC}, but that if $\beta$ is a Pisot number then the corresponding convolution is singular \cite{ErdosPisot}. For a survey of Bernoulli convolutions, including various alternative definitions, see \cite{SolomyakSixty}. 

It was shown in \cite{PollicottWeissDimensions} that the quantity
\[
\limsup_{n\to\infty}\left(\frac{\beta}{2}\right)^n\mathcal N_n(x;\beta)
\]
is uniformly bounded if and only if $\nu_{\beta}$ is absolutely continuous and has bounded density. This generalised a previous condition of Garsia and was termed the Garsia-Erd\H{o}s condition. 

The main focus of this article is to prove results on the typical growth rate of $\mathcal N_n(x;\beta)$ in the case that the Bernoulli convolution associated to $\beta$ is absolutely continuous. A consequence of our results is that we are able to give new necessary and sufficient conditions for the absolute continuity of Bernoulli convolutions without imposing any further conditions on the density. 

We also show how the growth rate of $\mathcal N_n(x;\beta)$ can be studied using the ergodic properties of the random $\beta$-transformation of Dajani and Kraaikamp \cite{DKRandom}. Our techniques allow us to prove an almost everywhere lower bound on the growth rate of $\mathcal N_n(x;\beta)$ for $\beta\in(1,2)$, extending a result of \cite{FengSidorov}.

In section \ref{Clustering} we give our results for the case that $\nu_{\beta}$ is absolutely continuous and explain how these can be viewed as a generalisation of the techniques of Garsia. In section \ref{Operators} we prove these results using an operator based on the self-similarity of the invariant density. In section \ref{RB} we give a result for general $\beta$ using the random $\beta$-transformation.


\section{Results}\label{Clustering}

We find it more convenient to deal with a renormalised version of $\mathcal N_n(x;\beta)$. For $n\in\mathbb N$ we define functions $f_n:I_{\beta}\to\mathbb R$ by
\[
f_n(x):=\left(\frac{(\beta-1)\beta^{n}}{2^n}\right)\mathcal N_n(x;\beta).
\]
Since $\sum_{i=n+1}^{\infty}x_i\beta^{-i}$ can take any value in $[0,\frac{1}{(\beta-1)\beta^n}]$, we have that a word $x_1,\cdots, x_n$ is an element of $\mathcal E^n_{\beta}(x)$ if and only if
\[
\sum_{i=1}^{n}x_i\beta^{-i}\in\left[x-\frac{1}{(\beta-1)\beta^{n}},x\right].
\]

So for each $n$, the value of $f_n(x)$ is equal to $\frac{(\beta-1)\beta^{n}}{2^n}$ multiplied by the number of words $x_1,\cdots, x_n$ for which $\sum_{i=1}^nx_i\beta^{-i}\in \left[x-\frac{1}{(\beta-1)\beta^{n}},x\right]$. Then since there are $2^n$ choices of $x_1,\cdots, x_n$, each of which cover an interval of size $\frac{1}{(\beta-1)\beta^{n}}$, we see that
\[
\int_{I_{\beta}}f_n(x) dx=1.
\]
We further define
\[
\overline f(x):= \limsup_{n\to\infty} f_n(x)
\]
and
\[
\underline f(x):=\liminf_{n\to\infty} f_n(x).
\]
The following is our main theorem.


\begin{theorem}\label{thm1}
The Bernoulli convolution $\nu_{\beta}$ is absolutely continuous if and only if 
\[
0<\int_{I_{\beta}}\overline f(x)dx<\infty. 
\]
In this case, one automatically has that $\int_{I_{\beta}}\overline f(x)dx \leq 2$ and that if $\nu_{\beta}$ has density $h_{\beta}$ then 
\[h_{\beta}(x)=\frac{\overline f(x)}{\int_{I_{\beta}}\overline f(x)dx}.\] 
\end{theorem}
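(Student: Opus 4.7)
My plan starts by reducing the theorem to a weak-convergence statement about densities. A Fubini computation shows that $f_n(x)\,dx$ is the density of the convolution $\mu_n := \nu_{\beta,n} \ast U_{[0,L_n]}$, where $L_n := 1/((\beta-1)\beta^n)$ and $U_{[0,L_n]}$ is the uniform probability on $[0,L_n]$. For any $\phi\in C_b(\mathbb R)$, Fubini gives
\[
\int_{I_\beta} f_n(x)\phi(x)\,dx \;=\; \int \phi_n(y)\,d\nu_{\beta,n}(y), \qquad \phi_n(y) := \frac{1}{L_n}\int_y^{y+L_n}\phi(t)\,dt,
\]
and since $\phi_n\to\phi$ uniformly on compacts while $\nu_{\beta,n}\to\nu_\beta$ weak-$\ast$ by construction, the measures $\mu_n$ converge weak-$\ast$ to $\nu_\beta$. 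This convergence of $f_n\,dx$ to $\nu_\beta$ is the common engine for both directions of the theorem.

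For the forward direction, assume $\nu_\beta$ has density $h_\beta$. Iterating the self-similarity $\nu_\beta = \nu_{\beta,n}\ast(\beta^{-n})_\ast\nu_\beta$ yields
\[
h_\beta(x) \;=\; \Bigl(\frac{\beta}{2}\Bigr)^{\!n}\!\!\sum_{y\in\mathcal E^n_\beta(x)} h_\beta\bigl(\beta^n(x-y)\bigr),
\]
which, compared with $f_n(x)=(\beta-1)(\beta/2)^n\mathcal N_n(x;\beta)$, gives
\[
\frac{f_n(x)}{h_\beta(x)} \;=\; \frac{(\beta-1)\,\mathcal N_n(x;\beta)}{\sum_{y\in\mathcal E^n_\beta(x)} h_\beta(\beta^n(x-y))}.
\]
The denominator on the right is an empirical average of $h_\beta$ over the scaled residues $\beta^n(x-y)\in I_\beta$, so controlling the limsup as $n\to\infty$ reduces to understanding those residues. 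This is where the transfer operator developed in Section \ref{Operators} enters: applied to $h_\beta$ it is the identity by self-similarity, while applied to the constant function $1$ it recovers a rescaling of $f_n$; a Perron--Frobenius style comparison should then identify $\overline f$ with a scalar multiple of $h_\beta$, the scalar being $\int_{I_\beta}\overline f(x)\,dx$ by the normalisation $\int f_n=1$.

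For the converse, starting from $0<\int_{I_\beta}\overline f<\infty$, I would show that the sequence $(f_n)$ is uniformly integrable with respect to Lebesgue measure, so that the weak limit $\nu_\beta$ is absolutely continuous; the density is then identified with $\overline f/\int\overline f$ using the same operator analysis. The step I expect to be the main obstacle is the bound $\int_{I_\beta}\overline f\leq 2$ and the pointwise identification with $h_\beta$. Fatou gives only $\int\underline f\leq 1$, so the limsup statement genuinely requires the operator-theoretic framework of Section \ref{Operators}; the factor $2$ appears to reflect a doubling at interval endpoints — a parallel quantity built from $\nu_{\beta,n}([x,x+L_n])$ also integrates to $1$, and the combined limsup of the two is what fits naturally with $2h_\beta(x)$.
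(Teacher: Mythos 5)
Your setup (identifying $f_n$ as the density of $\nu_{\beta,n}\ast U_{[0,L_n]}$ and the iterated self-similarity formula $h_{\beta}(x)=(\beta/2)^n\sum_{y\in\mathcal E^n_{\beta}(x)}h_{\beta}(\beta^n(x-y))$) is correct and consistent with the paper, but the proposal stops short of the three substantive steps, and the one concrete mechanism you do commit to is wrong. In the converse direction you propose to deduce absolute continuity from uniform integrability of $(f_n)$, but uniform integrability does not follow from $0<\int_{I_{\beta}}\overline f<\infty$: the hypothesis controls only the pointwise $\limsup$, and a sequence with $\int f_n=1$ can concentrate escaping mass on shrinking sets while $\overline f$ stays integrable (there is no dominating function, so reverse Fatou is unavailable). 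The paper's route avoids this entirely: since $f_n=P(f_{n-1})$, superadditivity of $\limsup$ gives $P(\overline f)\geq \overline f$ pointwise, while $\int P(\overline f)=\int\overline f<\infty$ forces $P(\overline f)=\overline f$ almost everywhere; then $\overline f/\int\overline f$ is a fixed density of $P$, which is exactly the absolute continuity of $\nu_{\beta}$ together with the identification of the density. This is also the ``Perron--Frobenius style comparison'' you gesture at in the forward direction, but it needs to be stated as this one-line $\limsup$ inequality rather than left implicit.

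The forward direction has two further gaps. First, you never address $\int_{I_{\beta}}\overline f>0$, which is the genuinely delicate point (Fatou gives nothing for a $\limsup$, so $\int f_n=1$ does not prevent $\overline f=0$ a.e.). The paper proves it by contradiction: if $f_n=P^n((\beta-1)\chi_{I_{\beta}})\to 0$ a.e.\ then monotonicity and linearity of $P$ give $P^n(g)\to 0$ a.e.\ for every bounded $g\in D$; choosing $g$ with $\|g-h_{\beta}\|_1<\epsilon<\tfrac12$ and using the $L^1$ non-expansiveness $\|P^n(g)-h_{\beta}\|_1\leq\|g-h_{\beta}\|_1$ yields a contradiction since $\|P^n(g)-h_{\beta}\|_1$ must eventually exceed $\tfrac12$. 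Second, your heuristic for the factor $2$ is in the right spirit but is not a proof; the actual argument shows $\overline f(x)\leq 2h_{\beta}(x)$ pointwise by noting that each of the $\mathcal N_n(x;\beta)$ prefixes, extended by arbitrary digits, lands in $[x-L_n,x+L_n]$, whence $\nu_{\beta,m}[x-L_n,x+L_n]\geq \mathcal N_n(x;\beta)2^{-n}$ for all $m>n$; letting $m\to\infty$ and applying Lebesgue differentiation to the interval of length $2L_n$ produces the $2$. As written, the proposal is a plausible outline with the key inequalities missing or misattributed to mechanisms that fail.
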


We have a slightly weaker theorem for $\underline f$.

\begin{theorem}\label{thm2}
Suppose that 
\[
0<\int_{I_{\beta}}\underline f(x)dx<\infty. 
\]
Then $\nu_{\beta}$ is absolutely continuous with density function \[h_{\beta}(x)=\frac{\underline f(x)}{\int_{I_{\beta}}\underline f(x)dx}.\]
Conversely, if $\nu_{\beta}$ is absolutely continuous with bounded density function $h_{\beta}$ then $\underline f(x)$ satisfies
\[
0<\int_{I_{\beta}}\underline f(x)dx<\infty. 
\]
\end{theorem}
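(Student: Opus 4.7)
The plan is to use the transfer operator developed in Section~\ref{Operators}. A direct unrolling of the definition of $\mathcal N_n(x;\beta)$ yields the recursion
\[
\mathcal N_n(x;\beta) = \mathcal N_{n-1}(\beta x;\beta)\mathbf{1}_{I_\beta}(\beta x) + \mathcal N_{n-1}(\beta x - 1;\beta)\mathbf{1}_{I_\beta}(\beta x - 1),
\]
which, after the rescaling $f_n = (\beta-1)(\beta/2)^n \mathcal N_n$, becomes $f_n = Lf_{n-1}$ for the operator
\[
Lh(y) := \tfrac{\beta}{2}\bigl[h(\beta y)\mathbf{1}_{I_\beta}(\beta y) + h(\beta y - 1)\mathbf{1}_{I_\beta}(\beta y - 1)\bigr].
\]
Here $L$ is positive, preserves the $L^1$-integral on $I_\beta$, and is the Perron--Frobenius operator for the IFS $\{S_0(x) = x/\beta,\,S_1(x) = (x+1)/\beta\}$ with equal weights. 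It is convenient to extend the sequence to $f_0 := (\beta-1)\mathbf{1}_{I_\beta}$, which matches $\mathcal N_0 = \mathbf{1}_{I_\beta}$ and satisfies $\int f_0 = 1$.

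For the sufficient direction, I assume $0 < \int_{I_\beta}\underline f\,dx < \infty$ and first establish that $L\underline f = \underline f$ almost everywhere. Setting $g_n := \inf_{k \ge n} f_k$ gives $g_n \nearrow \underline f$, and positivity of $L$ yields $L g_n \le L f_k = f_{k+1}$ for every $k \ge n$, hence $L g_n \le g_{n+1}$. Monotone convergence then gives $L\underline f \le \underline f$ a.e.; combining this with $\int L\underline f = \int\underline f < \infty$ forces equality a.e. By change of variables, $L\underline f = \underline f$ is equivalent to the measure $\mu := \underline f\cdot\lambda$ satisfying the self-similarity relation $\mu = \tfrac{1}{2}\bigl((S_0)_*\mu + (S_1)_*\mu\bigr)$. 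Since $\nu_\beta$ is the unique probability fixed point of this contracting IFS, we conclude $\nu_\beta = \mu/\int\underline f$, so $\nu_\beta$ is absolutely continuous with density $\underline f/\int\underline f$.

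For the converse, I assume $\nu_\beta$ is absolutely continuous with bounded density $h_\beta$. The bound $\int\underline f \le 1$ is immediate from Fatou's lemma applied to the sequence $f_n \ge 0$ with $\int f_n = 1$. For the positive lower bound, the key is a one-line domination: since $h_\beta$ is supported on $I_\beta$ and bounded,
\[
h_\beta(x) \le \|h_\beta\|_\infty \mathbf{1}_{I_\beta}(x) = \frac{\|h_\beta\|_\infty}{\beta-1}\, f_0(x) \quad \text{a.e.}
\]
Applying $L^n$ and using positivity together with the invariance $L^n h_\beta = h_\beta$ (which holds because $h_\beta\cdot\lambda = \nu_\beta$ is self-similar), this propagates to $h_\beta \le \tfrac{\|h_\beta\|_\infty}{\beta-1}\, f_n$ for every $n$. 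Taking the liminf yields $\underline f \ge \tfrac{\beta-1}{\|h_\beta\|_\infty}\, h_\beta$ a.e., and integration gives $\int\underline f \ge (\beta-1)/\|h_\beta\|_\infty > 0$.

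The main step requiring care is the invariance $L\underline f = \underline f$; after that the sufficient direction reduces to uniqueness of the self-similar measure. The heart of the converse is the domination $h_\beta \le C f_0$, propagated by $L^n$: this is exactly where boundedness of $h_\beta$ enters the argument, which explains why the converse in Theorem~\ref{thm2} requires boundedness and thus is strictly weaker than the converse in Theorem~\ref{thm1}.
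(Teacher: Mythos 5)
Your proposal is correct and follows essentially the same route as the paper: the identity $f_n=P^nf_0$, the inequality $P\underline f\leq\underline f$ combined with preservation of the integral to get $P\underline f=\underline f$ and hence (by uniqueness of the self-similar measure) $\underline f/\int\underline f=h_{\beta}$, Fatou for the upper bound, and the boundedness of $h_{\beta}$ for the lower bound. Your converse merely repackages the paper's estimate $h_{\beta}(x)\leq(\beta/2)^n\mathcal N_n(x;\beta)\,\|h_{\beta}\|_{\infty}$ as the domination $h_{\beta}\leq\frac{\|h_{\beta}\|_{\infty}}{\beta-1}f_n$ propagated by positivity of the operator, which is the same inequality.
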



Theorems \ref{thm1} and \ref{thm2} give immediate consequences for the growth rate of $\mathcal N_n(x;\beta)$. In particular, because $\nu_{\beta}$ is absolutely continuous for almost every $\beta\in(1,2)$ and has bounded density for almost every $\beta\in(1,\sqrt{2})$ (see \cite{SolomyakAC}) we have the following.
\begin{cor}
For almost every $\beta\in(1,2)$, and for (Lebesgue) almost every $x\in I_{\beta}$,
\[
\limsup_{n\to\infty} \frac{1}{n}\log(\mathcal N_n(x;\beta))=\log\left(\frac{2}{\beta}\right)
\]
For almost every $\beta\in(1,\sqrt{2})$ and for (Lebesgue) almost every $x\in I_{\beta}$,
\[
\lim_{n\to\infty} \frac{1}{n}\log(\mathcal N_n(x;\beta))=\log\left(\frac{2}{\beta}\right).
\]
\end{cor}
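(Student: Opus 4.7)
The plan is to convert the corollary into a statement about the renormalised functions $f_n$. Taking logarithms in the definition of $f_n$ yields
\[
\frac{1}{n}\log \mathcal N_n(x;\beta) = \log\!\left(\frac{2}{\beta}\right) - \frac{\log(\beta-1)}{n} + \frac{1}{n}\log f_n(x),
\]
so the two claims reduce respectively to showing that $\limsup_{n\to\infty}\frac{1}{n}\log f_n(x)=0$ almost everywhere (first claim), and that $\lim_{n\to\infty}\frac{1}{n}\log f_n(x)=0$ almost everywhere (second claim). The elementary principle is that $\limsup\frac{1}{n}\log a_n=0$ whenever $0<\limsup a_n<\infty$, and $\lim\frac{1}{n}\log a_n=0$ whenever in addition $\liminf a_n>0$; the task therefore reduces to establishing that $\overline f$ (resp.~$\underline f$) is almost everywhere both finite and strictly positive.

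For the first claim, Solomyak's theorem gives absolute continuity of $\nu_\beta$ for a.e.~$\beta\in(1,2)$, and Theorem \ref{thm1} then yields $\int_{I_\beta}\overline f\,dx<\infty$ together with $\overline f=h_\beta\cdot\int\overline f$. In particular $\overline f(x)<\infty$ a.e.\ and $\{\overline f>0\}=\{h_\beta>0\}$, so what remains is to show that $h_\beta>0$ Lebesgue-a.e.\ on $I_\beta$. This is a standard consequence of the self-similarity relation $h_\beta(y)=\tfrac{\beta}{2}\bigl(h_\beta(\beta y)+h_\beta(\beta y-1)\bigr)$ and the full support of $\nu_\beta$ on $I_\beta$: the positive set $\{h_\beta>0\}$ is forward invariant under the two IFS contractions $y\mapsto y/\beta$ and $y\mapsto (y+1)/\beta$ whose attractor is $I_\beta$, which, combined with the ergodicity of the associated expanding dynamics, forces the complement to be Lebesgue null. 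Hence $0<\overline f(x)<\infty$ a.e., proving the first claim.

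For the second claim, for a.e.~$\beta\in(1,\sqrt 2)$ the density $h_\beta$ is bounded, so the converse direction of Theorem \ref{thm2} gives $0<\int\underline f\,dx<\infty$ and then its forward direction gives $\underline f=h_\beta\cdot\int\underline f$; repeating the positivity argument above yields $0<\underline f(x)<\infty$ a.e. Since bounded density is in particular absolute continuity, Theorem \ref{thm1} also applies and $\overline f(x)<\infty$ a.e., so we obtain matching $\liminf$ and $\limsup$ for $\frac{1}{n}\log f_n(x)$, and hence the full limit equal to $0$.

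The only non-routine step is the almost-everywhere positivity of the density $h_\beta$. Although this is a standard fact about absolutely continuous self-similar measures with full support, making the self-similarity argument rigorous at the endpoints of $I_\beta$, where one of the two inverse branches of the doubling-type map fails to be defined, is the mildly technical point of the argument; once this is in hand everything else is bookkeeping.
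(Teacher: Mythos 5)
Your argument is correct and is exactly the route the paper intends: the corollary is presented as an immediate consequence of Theorems \ref{thm1} and \ref{thm2} together with Solomyak's theorem, and your reduction to showing $0<\overline f(x)<\infty$ (resp.\ $0<\underline f(x)<\infty$) almost everywhere, via $\overline f = h_\beta\int\overline f$, is precisely the bookkeeping the paper leaves implicit. The one ingredient you sketch from scratch --- the almost-everywhere positivity of $h_\beta$ --- is simply cited in the paper (to Mauldin and Simon, in the proof of Lemma \ref{underlinebound}), so you may quote it rather than reprove it.
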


These results contrast with the case that $\beta$ is a Pisot number. It was shown in \cite{FengSidorov} that if $\beta$ is a Pisot number then  
\[
\lim_{n\to\infty} \frac{1}{n}\log(\mathcal N_n(x;\beta))<\log\left(\frac{2}{\beta}\right).
\]

\subsection{Comparison with the Garsia entropy technique}

In \cite{GarsiaEntropy}, Garsia approached the question of the potential singularity of Bernoulli convolutions by considering the entropy of the measures $\nu_{\beta,n}$, where the entropy of a discrete measure $m$ supported on a finite set $\{x_1,\cdots ,x_k\}$ is defined as
\[
H(m)=-\sum_{i=1}^k m(x_i)\log(m(x_i)).
\]
Garsia showed that if
\[
\lim_{n\to\infty}\frac{H(\nu_{\beta,n})}{n}<\log(\beta)
\]
then $\nu_{\beta}$ is singular. This was used to provide an alternative proof that Pisot numbers admit singular Bernoulli convolutions, but has not been successful in determining whether $\nu_{\beta}$ is singular or absolutely continuous for any other $\beta$. Garsia's approach can be characterised as quantifying to what extent the finite sums $\sum_{i=1}^n x_i\beta^{-i}$ coincide for different choices of $x_1,\cdots, x_n$. 

Rather than asking for coincidence of these finite sums, our approach asks about clustering. The functions $f_n$ have integral $1$ and take values $f_n(x)$ proportional to the number of elements of
\[
\mathcal D_n:=\left\{\sum_{i=1}^n x_i\beta^{-i}:x_i\in\{0,1\}\right\}
\]
which lie in $\left[x-\frac{1}{(\beta-1)\beta^{n}},x\right]$. Then the regularity of the function $f_n$ describes the degree of clustering in $\mathcal D_n$. In particular, if $\int_{I_{\beta}}\overline f(x)dx\in\{0,\infty\}$ then this corresponds to a high degree of clustering in the sets $\mathcal D_n$ as $n$ tends to infinity. 

Thus, our results can be interpreted as saying that the absolute continuity or singularity of the Bernoulli convolution can be determined by measuring the degree of clustering in the sets $\mathcal D_n$.\footnote{This perspective on the question of counting $\beta$-expansions arose out of conversations with Evgeny Verbitskiy, many thanks to Evgeny for these.}


\section{An Operator on Densities}\label{Operators}
The results stated in the previous section come from simple analysis of a naturally defined operator related to Bernoulli convolutions. Bernoulli convolutions satisfy the self-similarity relation
\[
\nu_{\beta}=\frac{1}{2}\left(\nu_{\beta}\circ T_0 + \nu_{\beta}\circ T_1\right)
\]
where $T_i(x):\beta x-i$. This implies that if $\nu_{\beta}$ is absolutely continuous with density $h_{\beta}$ then $h_{\beta}$ must also satisfy a self-similarity relation:
\[
h_{\beta}(x)=\frac{\beta}{2}\left(h_{\beta}(\beta x)+h_{\beta}(\beta x-1)\right).
\]
We define the operator $P:L^1(\mathbb R)\to L^1(\mathbb R)$ by
\[
P(f)(x)=\frac{\beta}{2}\left(f(\beta x)+f(\beta x-1)\right).
\]
$P$ preserves the set of densities 
\[
D:=\{f\in L^1: f\geq 0, \int_{\mathbb R} f(x)dx=1, x\notin I_{\beta}\implies f(x)=0\}.
\]
The absolute continuity of $\nu_{\beta}$  is equivalent to the existance of a function $h_{\beta}\in D$ satisfying $P(h_{\beta})=h_{\beta}$.

\begin{lemma}\label{BasicP} The following are elementary properties of $P$
\begin{enumerate}
 \item $f\leq g\implies P(f)\leq P(g)$
 \item $P(kf)=kP(f)$ $\forall k\in\mathbb R$
 \item If $f\geq 0$ then $\int_{\mathbb R}P(f)=\int_{\mathbb R} f$
 \item If $P$ has fixed point $h_{\beta}$ then $||P(f)-h_{\beta}||_1\leq ||f-h_{\beta}||_1$ for all $f\in L^1$.
\end{enumerate}

\end{lemma}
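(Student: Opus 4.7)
The plan is to read off properties (1)--(3) directly from the defining formula $P(f)(x)=\frac{\beta}{2}(f(\beta x)+f(\beta x-1))$, and then deduce (4) from (3) via linearity of $P$ and the triangle inequality.

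For (1), if $f\leq g$ pointwise a.e.\ then $f(\beta x)\leq g(\beta x)$ and $f(\beta x-1)\leq g(\beta x-1)$ for a.e.\ $x$, and since $\beta/2>0$ this inequality is preserved under the weighted sum, giving $P(f)\leq P(g)$. Property (2) is immediate from the fact that scalar multiplication commutes with the dilations and translations in the definition. The same elementary observation shows that $P(f_1+f_2)=P(f_1)+P(f_2)$, so $P$ is in fact a linear operator on $L^1$; I will use this when proving (4).

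For (3), I would perform a change of variables. For $f\geq 0$ Tonelli's theorem lets me split the integral as extended reals: the substitution $u=\beta x$ gives $\int_{\mathbb R} f(\beta x)\,dx=\tfrac{1}{\beta}\int_{\mathbb R} f(u)\,du$, and the substitution $u=\beta x-1$ gives the same identity for the second summand. Combining,
\[
\int_{\mathbb R} P(f)\,dx=\frac{\beta}{2}\cdot\frac{2}{\beta}\int_{\mathbb R} f\,dx=\int_{\mathbb R} f\,dx.
\]

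For (4), I would combine the linearity of $P$ with (3) applied to $|f-h_\beta|$. Since $P$ is linear and $P(h_\beta)=h_\beta$, we have $P(f)-h_\beta=P(f-h_\beta)$. The triangle inequality inside $P$ yields the pointwise bound
\[
|P(g)(x)|\leq \frac{\beta}{2}\bigl(|g(\beta x)|+|g(\beta x-1)|\bigr)=P(|g|)(x)
\]
valid for any $g\in L^1$; taking $g=f-h_\beta$, integrating, and applying (3) to the nonnegative function $|g|$ gives
\[
\|P(f)-h_\beta\|_1=\|P(g)\|_1\leq \int_{\mathbb R} P(|g|)\,dx=\int_{\mathbb R} |g|\,dx=\|f-h_\beta\|_1.
\]
None of the four items presents a genuine obstacle; the only minor care needed is in justifying the change of variables for (3), which is routine because both sides may be treated as elements of $[0,\infty]$.
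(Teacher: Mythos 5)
Your proposal is correct and follows essentially the same route as the paper: items (1)--(3) read off from the definition with a change of variables, and item (4) obtained by writing $P(f)-h_\beta=P(f-h_\beta)$, applying the triangle inequality pointwise, and invoking the integral-preservation property. The only cosmetic difference is that you package the triangle-inequality step as the bound $|P(g)|\leq P(|g|)$ before integrating, whereas the paper carries out the same computation directly on the integrals.
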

\begin{proof}
The first two statements follow immediately from the definition of $P$. The third statement follows from the fact that, for each $i$
\begin{equation*}\label{int}
\int_{\mathbb R} f(T_i(x))dx= \frac{1}{\beta} \int_{\mathbb R}f(x)dx.
\end{equation*}
The fourth statement is just an application of the triangle inequality. Using that $P(h_{\beta})=h_{\beta}$, we have that
\begin{eqnarray*}
\int_{\mathbb R}|P(f)(x)-h_{\beta}(x)|dx&=&\int_{\mathbb R}\left|\frac{\beta}{2}\left(f(\beta x)-h_{\beta}(\beta x)+f(\beta x-1)-h_{\beta}(\beta x-1)\right)\right|dx\\
&\leq& \frac{\beta}{2}\left(\int_{\mathbb R}\left|f(\beta x)-h_{\beta}(\beta x)\right|dx+\int_{\mathbb R}\left|f(\beta x-1)-h_{\beta}(\beta x-1)\right|dx\right)\\
&=&\frac{\beta}{2}\left(\frac{1}{\beta}\int_{\mathbb R} |f(x)-h_{\beta}(x)|dx+\frac{1}{\beta}\int_{\mathbb R} |f(x)-h_{\beta}(x)|dx\right)\\
&=&\int_{\mathbb R} |f(x)-h_{\beta}(x)|dx
\end{eqnarray*}
as required.
\end{proof}
We now link $P$ with the functions defined in the first section.
\begin{lemma}\label{fP}
\[
f_n=P^n((\beta-1)\chi_{I_{\beta}})
\]
where $\chi_{I_{\beta}}$ is the indicator function on the interval $I_{\beta}$.
\end{lemma}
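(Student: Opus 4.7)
The plan is to prove this by induction on $n$, exploiting the fact that length-$(n{+}1)$ prefixes of $\beta$-expansions of $x$ factor through their first digit into length-$n$ prefixes of $\beta$-expansions of $\beta x$ or $\beta x - 1$.

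For the base case $n = 0$, the empty word extends to a $\beta$-expansion of any $x \in I_\beta$, so $\mathcal{N}_0(x;\beta) = \chi_{I_\beta}(x)$; combined with the normalising factor $\frac{(\beta-1)\beta^0}{2^0} = \beta - 1$, this gives $f_0 = (\beta-1)\chi_{I_\beta}$ as required.

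For the inductive step, assuming $f_n = P^n((\beta-1)\chi_{I_\beta})$, it suffices to show that $f_{n+1} = P(f_n)$. The key combinatorial identity is that a word $(x_1, x_2, \ldots, x_{n+1})$ lies in $\mathcal{E}^{n+1}_\beta(x)$ if and only if $\beta x - x_1 \in I_\beta$ and its tail $(x_2, \ldots, x_{n+1})$ lies in $\mathcal{E}^n_\beta(\beta x - x_1)$. Partitioning $\mathcal{E}^{n+1}_\beta(x)$ according to the first digit $x_1 \in \{0,1\}$ therefore yields
\[
\mathcal{N}_{n+1}(x;\beta) = \mathcal{N}_n(\beta x;\beta) + \mathcal{N}_n(\beta x - 1;\beta),
\]
under the convention that $\mathcal{N}_n(\cdot;\beta)$ (equivalently $f_n$) is extended by zero outside $I_\beta$; this convention is what handles the boundary cases in which only one of $\beta x$, $\beta x - 1$ actually lies in $I_\beta$. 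Multiplying both sides by $\frac{(\beta-1)\beta^{n+1}}{2^{n+1}}$ and comparing with the definition of $P$ gives
\[
f_{n+1}(x) = \tfrac{\beta}{2}\bigl(f_n(\beta x) + f_n(\beta x - 1)\bigr) = P(f_n)(x) = P^{n+1}((\beta-1)\chi_{I_\beta})(x).
\]

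There is no genuine obstacle here; the only delicate point is the boundary-zero convention mentioned above, and it is forced anyway by the observation that $\beta$-expansions of points outside $I_\beta$ do not exist. As a sanity check, one could bypass the induction entirely: iterating the definition of $P$ yields
\[
P^n(f)(x) = \frac{\beta^n}{2^n} \sum_{(x_1, \ldots, x_n) \in \{0,1\}^n} f\Bigl(\beta^n x - \sum_{i=1}^n x_i \beta^{n-i}\Bigr),
\]
and specialising to $f = (\beta-1)\chi_{I_\beta}$ shows that the surviving summands are precisely those words for which $\sum_{i=1}^n x_i \beta^{-i} \in [x - \frac{1}{(\beta-1)\beta^n}, x]$, i.e.\ exactly the elements of $\mathcal{E}^n_\beta(x)$, recovering $f_n(x)$ on the nose.
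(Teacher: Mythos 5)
Your proof is correct. It takes a mildly different route from the paper: the paper expands $P^n((\beta-1)\chi_{I_\beta})$ directly into a sum of $\chi_{I_\beta}(T_{x_n}\circ\cdots\circ T_{x_1}(x))$ over all $2^n$ words and invokes the fact that a word lies in $\mathcal E^n_\beta(x)$ if and only if the corresponding composition of the maps $T_i$ keeps $x$ inside $I_\beta$, whereas you induct on $n$ via the one-step recursion $\mathcal N_{n+1}(x;\beta)=\mathcal N_n(\beta x;\beta)+\mathcal N_n(\beta x-1;\beta)$, i.e.\ $f_{n+1}=P(f_n)$. The two arguments rest on exactly the same combinatorial fact (prefixes of expansions of $x$ factor through $T_{x_1}(x)=\beta x-x_1$), so the difference is one of packaging rather than substance; your inductive version has the small advantage of isolating the identity $f_{n+1}=P(f_n)$, which the paper itself needs later in the proof of Lemma \ref{Pinvariant}, and your closing remark recovers the paper's direct expansion verbatim. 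Your explicit zero-extension convention outside $I_\beta$ is the right way to handle the boundary cases and is consistent with the paper's class $D$ of densities supported on $I_\beta$; the only nitpick is that the base case is more naturally checked at $n=1$ if one is squeamish about $\mathcal N_0$, but as you note, $\mathcal N_0(x;\beta)=\chi_{I_\beta}(x)$ holds because every point of $I_\beta$ admits at least one $\beta$-expansion.
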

\begin{proof}
Expanding $P$, we see that
\[
P^n((\beta-1)\chi_{I_{\beta}})(x)=(\beta-1)\left(\frac{\beta}{2}\right)^n\sum_{x_1,\cdots, x_n\in\{0,1\}^n}\chi_{I_{\beta}}(T_{x_n}\circ T_{x_{n-1}}\circ\cdots T_{x_1}(x))
\]
But a word $x_1,\cdots, x_n\in\{0,1\}^n$ is in $\mathcal E_{\beta}^n(x)$ if and only if $T_{x_n}\circ T_{x_{n-1}}\circ\cdots T_{x_1}(x)\in I_{\beta}$, see for example \cite{DKRandom}. This gives that
\[
\sum_{x_1,\cdots, x_n\in\{0,1\}^n}\chi_{I_{\beta}}(T_{x_n}\circ T_{x_{n-1}}\circ\cdots T_{x_1}(x))=\mathcal N_n(x;\beta).
\]
Then since $f_n:=(\beta-1)\left(\frac{\beta}{2}\right)^n\mathcal N_n(x;\beta)$ we have the required result.
\end{proof}

The next four lemmas prove our main results.
\begin{lemma}\label{overlinebound}
If $\nu_{\beta}$ is absolutely continuous then $0<\int_{I_{\beta}}\overline f(x)dx$.
\end{lemma}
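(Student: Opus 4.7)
The plan is to exhibit a pointwise lower bound for $f_n$ that is both uniformly of positive mass and dominated by an integrable function, so that the reverse Fatou lemma delivers the desired integral bound on $\overline f$ without needing any actual convergence of $f_n$.

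Concretely, let $g_0 := (\beta-1)\chi_{I_\beta}$ so that $f_n = P^n g_0$ by Lemma~\ref{fP}, and set
\[
\phi := \min(g_0, h_\beta).
\]
Since $g_0$ is strictly positive on $I_\beta$ and $h_\beta$ is a density on $I_\beta$, the function $\phi$ is nonnegative and measurable, and $c := \int \phi > 0$ (the set $\{h_\beta > 0\}$ has positive measure, and on it $\phi = \min(\beta-1, h_\beta) > 0$).

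By construction $\phi \leq g_0$ and $\phi \leq h_\beta$, so the monotonicity of $P$ (Lemma~\ref{BasicP}.1) together with $P h_\beta = h_\beta$ gives the two pointwise bounds
\[
P^n\phi \leq P^n g_0 = f_n \qquad \text{and} \qquad P^n\phi \leq h_\beta
\]
for every $n$. Integral preservation (Lemma~\ref{BasicP}.3) yields $\int P^n\phi = c$ for all $n$. The first bound gives $\limsup_n P^n\phi \leq \overline f$ pointwise, and the second bound means the sequence $(P^n\phi)$ is dominated by the integrable function $h_\beta$. Applying the reverse Fatou lemma and then this pointwise bound,
\[
\int_{I_\beta} \overline f \;\geq\; \int_{I_\beta} \limsup_n P^n\phi \;\geq\; \limsup_n \int_{I_\beta} P^n\phi \;=\; c \;>\; 0,
\]
which is exactly what is required.

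The one non-routine step is the choice of comparison function: it must simultaneously sit below $g_0$ (to propagate down to $f_n$ through monotonicity) and below $h_\beta$ (to supply the integrable dominator needed for reverse Fatou). The pointwise minimum $\min(g_0, h_\beta)$ is the natural such object, and its strict positivity is automatic since $g_0$ is a strictly positive density on the full interval $I_\beta$. Everything else is a direct application of the elementary properties of $P$ collected in Lemma~\ref{BasicP}.
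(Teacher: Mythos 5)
Your proof is correct, but it is a genuinely different argument from the one in the paper. You argue directly: the comparison function $\phi=\min\bigl((\beta-1)\chi_{I_{\beta}},h_{\beta}\bigr)$ has positive integral $c$, sits below $g_0$ so that monotonicity gives $P^n\phi\leq f_n$, and sits below the fixed point $h_{\beta}$ so that $P^n\phi\leq h_{\beta}$ supplies the integrable dominator needed for the reverse Fatou lemma; combined with the conservation $\int P^n\phi=c$ from Lemma \ref{BasicP}.3 this yields $\int_{I_{\beta}}\overline f\geq c>0$. The paper instead argues by contradiction: if $\int\overline f=0$ then $f_n\to 0$ a.e., hence $P^n g\to 0$ a.e.\ for every \emph{bounded} $g\in D$; approximating $h_{\beta}$ in $L^1$ by such a bounded $g_{\epsilon}$ and invoking the non-expansiveness of $P$ (Lemma \ref{BasicP}.4) produces the contradiction $\|h_{\beta}-P^n g_{\epsilon}\|_1>\tfrac12>\epsilon$. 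Your route avoids both the $L^1$-approximation step and the non-expansiveness property altogether, replacing them with a single application of reverse Fatou, and it buys something extra: an explicit quantitative bound $\int_{I_{\beta}}\overline f(x)\,dx\geq\int_{I_{\beta}}\min(\beta-1,h_{\beta}(x))\,dx$ rather than bare positivity. The paper's contradiction argument, for its part, isolates the statement that $f_n\to 0$ a.e.\ forces $P^n g\to 0$ a.e.\ for all bounded densities, which connects more directly to the convergence questions raised in the final section. Both arguments rest only on Lemma \ref{fP} and the elementary properties in Lemma \ref{BasicP}, and both are complete.
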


\begin{proof}
We suppose that $\nu_{\beta}$ is absolutely continuous with density $h_{\beta}$, and for a contradiction suppose that $\int_{I_{\beta}}\overline f(x)dx=0$. Consequently we have that $f_n=P^n((\beta-1)\chi_{I_{\beta}})\to 0$ for Lebesgue almost every $x$. Now if $P^n((\beta-1)\chi_{I_{\beta}})\to 0$ almost everywhere, then linearity and monotonicity of $P$ give that for any bounded $g\in D$, $P^n(g)\to 0$  almost everywhere. Since $h_{\beta}\in L^1$ we can, for any $0<\epsilon<\frac{1}{2}$, take a bounded function $g_{\epsilon}\in D$ with 
\[
||h_{\beta}-g_{\epsilon}||_1<\epsilon.
\]
But $P^n(g_{\epsilon})\to 0$ a.e. since $g_{\epsilon}$ is bounded. Then since $h_{\beta}$ has integral $1$, we see that eventually
\[
||h_{\beta}-P^n(g_{\epsilon})||_1>\frac{1}{2}>\epsilon.
\]
This contradicts part 4 of lemma \ref{BasicP}, the non expansiveness of $P$ in $L^1$.
\end{proof}
\begin{lemma}\label{overlineupperbound}
If $\nu_{\beta}$ is absolutely continuous then $\int_{I_{\beta}}\overline f(x)dx\leq2$.
\end{lemma}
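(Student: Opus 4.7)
The plan is to compare $f_n(x)$ directly to $h_\beta$ by exploiting the coupling between the measures $\nu_{\beta,n}$ and $\nu_\beta$. Writing $\delta_n := \frac{1}{(\beta-1)\beta^n}$, the reformulation preceding Theorem \ref{thm1} can be read as
\[
f_n(x) = \frac{1}{\delta_n}\,\nu_{\beta,n}\bigl([x-\delta_n,x]\bigr).
\]
The key observation is that any finite word $(x_1,\dots,x_n)\in\{0,1\}^n$ and any infinite extension $(x_{n+1},x_{n+2},\dots)\in\{0,1\}^{\mathbb N}$ satisfy
\[
0 \;\leq\; \sum_{i=1}^\infty x_i\beta^{-i} \;-\; \sum_{i=1}^n x_i\beta^{-i} \;\leq\; \delta_n.
\]
Hence whenever the partial sum lies in $[x-\delta_n,x]$, the full sum lies in $[x-\delta_n,x+\delta_n]$, irrespective of how the word is completed. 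Viewing $\nu_{\beta,n}$ and $\nu_\beta$ as push-forwards of the symmetric Bernoulli product measure on $\{0,1\}^{\mathbb N}$, this inclusion of events yields
\[
\nu_{\beta,n}\bigl([x-\delta_n,x]\bigr) \;\leq\; \nu_\beta\bigl([x-\delta_n,x+\delta_n]\bigr) \;=\; \int_{x-\delta_n}^{x+\delta_n} h_\beta(y)\,dy.
\]

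Combining the two displays gives
\[
f_n(x) \;\leq\; \frac{1}{\delta_n}\int_{x-\delta_n}^{x+\delta_n} h_\beta(y)\,dy \;=\; 2\cdot\frac{1}{2\delta_n}\int_{x-\delta_n}^{x+\delta_n} h_\beta(y)\,dy.
\]
Since $h_\beta\in L^1(\mathbb R)$ and $\delta_n\to 0$, the Lebesgue differentiation theorem says that for Lebesgue almost every $x$ the symmetric average on the right converges to $h_\beta(x)$. Taking $\limsup$ in $n$ therefore gives $\overline f(x) \leq 2 h_\beta(x)$ for almost every $x\in I_\beta$, and integrating yields
\[
\int_{I_\beta}\overline f(x)\,dx \;\leq\; 2\int_{I_\beta} h_\beta(x)\,dx \;=\; 2,
\]
which is the desired bound.

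No essential obstacle arises: the coupling inequality is a one-line observation, and the Lebesgue differentiation step is off-the-shelf once $h_\beta\in L^1$ is in hand. The only mild subtlety is the loss of a factor $2$, which enters because the coupling replaces the one-sided interval $[x-\delta_n,x]$ of length $\delta_n$ by the symmetric interval $[x-\delta_n,x+\delta_n]$ of length $2\delta_n$; one does not expect the bound $\int\overline f\leq 2$ to be sharp, as in any case where $f_n\to h_\beta$ sufficiently strongly the correct value of the integral is $1$.
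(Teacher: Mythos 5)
Your proof is correct and follows essentially the same route as the paper: the same observation that a prefix with partial sum in $[x-\delta_n,x]$ forces every completion to land in $[x-\delta_n,x+\delta_n]$, the same factor-of-$2$ loss from symmetrizing the interval, and the same appeal to Lebesgue differentiation to conclude $\overline f(x)\leq 2h_\beta(x)$ almost everywhere. The only cosmetic difference is that you pass directly to $\nu_\beta$ as a push-forward of the Bernoulli measure, whereas the paper bounds $\nu_{\beta,m}$ of the doubled interval for $m>n$ and then lets $m\to\infty$ using weak convergence.
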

\begin{proof}
In order to show that $\int_{I_{\beta}}\overline f(x)dx\leq 2$ we in fact show that $\overline f(x)\leq 2h_{\beta}(x)$.
Given some word $x_1,\cdots, x_n\in\{0,1\}^n$, we see that all sequences $\underline x$ starting with word $x_1,\cdots, x_n$ are $\beta$-expansions of points
\[
x=\sum_{i=1}^{\infty}x_i\beta^{-i}\in\left[\sum_{i=1}^n x_i\beta^{-i},\sum_{i=1}^n x_i\beta^{-i}+\frac{1}{(\beta-1)\beta^n}\right].
\]

Now we suppose for each $j\in\{1,\cdots,\mathcal N_n(x;\beta)\}$, $x_1^j,\cdots, x_n^j\in\mathcal E_{\beta}^n(x)$. Then 
\[
\sum_{i=1}^nx_i^j\beta^{-i}\in\left[x-\frac{1}{(\beta-1)\beta^n},x\right],
\]
and so for any $m>n$, $j\in\{1,\cdots,\mathcal N_n(x;\beta)\}$ and any word $x_1,\cdots, x_m$ starting with $x_1^j,\cdots, x_n^j$ we have that
\[
\sum_{i=1}^{m} x_i\beta^{-i}\in\left[x-\frac{1}{(\beta-1)\beta^n},x+\frac{1}{(\beta-1)\beta^n}\right].
\]
There are at least $2^{m-n}\mathcal N_n(x;\beta)$ such words. Hence
\[
\nu_{\beta,m}\left[x-\frac{1}{(\beta-1)\beta^n},x+\frac{1}{(\beta-1)\beta^n}\right]\geq \frac{1}{2^m}(\mathcal N_n(x;\beta).2^{m-n})=\frac{\mathcal N_n(x;\beta)}{2^n}
\]
Then using that $\nu_{\beta,m}\to\nu_{\beta}$, multiplying each side by $\beta^n$, and using the fact that 
\[
\lim_{n\to\infty}\frac{(\beta-1)\beta^n}{2}\nu_{\beta}\left[x-\frac{1}{(\beta-1)\beta^n},x+\frac{1}{(\beta-1)\beta^n}\right]=h_{\beta}(x)
\]
we get that
\[
\overline f(x)=\limsup_{n\to\infty} (\beta-1)\left(\frac{\beta}{2}\right)^n\mathcal N_n(x;\beta)\leq 2h_{\beta}(x). 
\]
\end{proof}

A similar argument was used in Appendix C of \cite{PollicottWeissDimensions} to bound $\left(\frac{\beta}{2}\right)^n\mathcal N_n(x;\beta)$ in the case that $h_{\beta}(x)$ is bounded. The authors attributed the argument to Yuval Peres. We now relate $\overline f$ to $h_{\beta}$.

\begin{lemma}\label{Pinvariant}
If $\overline f$ has positive finite integral then $\nu_{\beta}$ is absolutely continuous and $\frac{\overline f}{\int_{I_{\beta}}\overline f(x)dx} = h_{\beta}$. The same is true for $\underline f$.
\end{lemma}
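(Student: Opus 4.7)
The plan is to show that both $\overline f$ and $\underline f$ are fixed points of the operator $P$, after which normalisation together with the uniqueness of the self-similar probability measure associated to the contractive IFS $\{x\mapsto x/\beta,\ x\mapsto (x+1)/\beta\}$ will identify the normalised function with $h_\beta$ and simultaneously establish absolute continuity of $\nu_\beta$.

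The starting point is the pointwise identity $f_{n+1}(x)=P(f_n)(x)=\frac{\beta}{2}(f_n(\beta x)+f_n(\beta x-1))$, which follows from Lemma \ref{fP} and the definition of $P$. Applying $\limsup_n$ to both sides and using subadditivity of limsup,
\[
\overline f(x) \;=\; \limsup_n f_{n+1}(x) \;\leq\; \frac{\beta}{2}\bigl(\limsup_n f_n(\beta x)+\limsup_n f_n(\beta x-1)\bigr) \;=\; P(\overline f)(x).
\]
Part 3 of Lemma \ref{BasicP} yields $\int P(\overline f)=\int \overline f$, which is finite by hypothesis, so the non-negative function $P(\overline f)-\overline f$ integrates to zero and must vanish almost everywhere. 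Thus $P(\overline f)=\overline f$ a.e. The argument for $\underline f$ is identical with liminf replacing limsup, using superadditivity of liminf to reverse the inequality to $\underline f\geq P(\underline f)$, which again becomes equality by the same integral-matching step.

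Setting $h:=\overline f/\int_{I_\beta}\overline f(x)\,dx$, the positivity and finiteness of the denominator together with the linearity of $P$ (part 2 of Lemma \ref{BasicP}) give $h\in D$ and $P(h)=h$. The measure $h\,dx$ is then a probability measure supported on $I_\beta$ satisfying the same self-similarity relation as $\nu_\beta$, and by Hutchinson's uniqueness theorem for the attracting probability measure of a contractive IFS the two measures must coincide. Hence $\nu_\beta$ is absolutely continuous with $h_\beta=\overline f/\int_{I_\beta}\overline f(x)\,dx$; the same deduction applies verbatim to $\underline f$.

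I expect the main obstacle to be the step upgrading the one-sided inequality $\overline f\leq P(\overline f)$ (respectively $\underline f\geq P(\underline f)$) to pointwise equality almost everywhere: this is precisely where the hypothesis of finite integral is used essentially, since without it one cannot conclude that two non-negative functions with $P(\overline f)\geq \overline f$ must agree. The remaining passage from a $P$-invariant density to $h_\beta$ is standard IFS theory, and the asymmetry noted in Theorem \ref{thm2} for $\underline f$ enters only when one tries to reverse the implication and deduce positivity and finiteness of $\int \underline f$ from absolute continuity.
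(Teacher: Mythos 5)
Your proposal is correct and follows essentially the same route as the paper: the pointwise inequality $\overline f\leq P(\overline f)$ (resp.\ $\underline f\geq P(\underline f)$) from sub/superadditivity of $\limsup$/$\liminf$, upgraded to equality a.e.\ via the integral-preservation of $P$ and the finiteness hypothesis, followed by normalisation and identification of the resulting fixed point in $D$ with $h_\beta$. The only difference is cosmetic: you justify the final identification by explicitly invoking uniqueness of the self-similar measure (Hutchinson), which the paper leaves implicit.
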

 
\begin{proof}
We proved in lemma \ref{fP} that $f_n(x)=P^n((\beta-1)\chi_{I_{\beta}})$, which gives in particular that
\[
f_n(x)=P(f_{n-1}(x))=\frac{\beta}{2}\left(f_{n-1}(\beta x)+f_{n-1}(\beta x-1)\right)
\]
Then since $\limsup (a_n+b_n)\leq \limsup a_n + \limsup b_n$, we have that
\[
\overline f (x)\leq \frac{\beta}{2}(\overline f (\beta x)+ \overline f(\beta x-1)),
\]
i.e. $P(\overline f)\geq \overline f$. But since $\overline f$ is non negative, part 3 of lemma \ref{BasicP} gives that $\int_{\mathbb R}P(\overline f)=\int_{\mathbb R}\overline f$. So we must have that $P(\overline f)=\overline f$ almost everywhere.

Then we see that $\dfrac{\overline f}{\int_{\mathbb R}\overline f}$ has integral one and is a fixed point of $P$, and therefore it must be equal to $h_{\beta}$ almost everywhere. Similar arguments work for $\underline f$ using that $P(\underline f)\leq \underline f$.
\end{proof}

Lemmas \ref{overlinebound}, \ref{overlineupperbound} and \ref{Pinvariant} complete the proof of Theorem \ref{thm1}.
Finally, we prove the following lemma.

\begin{lemma}\label{underlinebound}
If $\nu_{\beta}$ is absolutely continuous and has bounded density function $h_{\beta}$ then $0<\int_{I_{\beta}}\underline f(x)dx\leq1.$
\end{lemma}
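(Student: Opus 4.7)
The plan is to split the statement into its two inequalities and handle each separately. Both are short.

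For the upper bound $\int_{I_{\beta}} \underline f(x)\,dx \leq 1$, I would simply apply Fatou's lemma to the non-negative sequence $(f_n)$. Since $\int_{I_{\beta}} f_n(x)\,dx = 1$ for every $n$ (as noted in Section~\ref{Clustering}), Fatou gives
\[
\int_{I_{\beta}} \underline f(x)\,dx = \int_{I_{\beta}} \liminf_{n\to\infty} f_n(x)\,dx \leq \liminf_{n\to\infty} \int_{I_{\beta}} f_n(x)\,dx = 1.
\]

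For the strict positivity $\int_{I_{\beta}} \underline f(x)\,dx > 0$, the key idea is to dominate $h_{\beta}$ by a multiple of the initial density $(\beta-1)\chi_{I_{\beta}}$ and then use monotonicity of $P$ to propagate this comparison to all $f_n$. Concretely, since $h_{\beta}$ is bounded, say $h_{\beta}(x) \leq M$ almost everywhere, and $h_{\beta}$ is supported on $I_{\beta}$, we have the pointwise inequality
\[
h_{\beta}(x) \leq \frac{M}{\beta-1}\,(\beta-1)\chi_{I_{\beta}}(x),
\]
or equivalently $(\beta-1)\chi_{I_{\beta}} \geq \frac{\beta-1}{M} h_{\beta}$. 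Applying $P^n$ and using parts 1 and 2 of Lemma~\ref{BasicP} together with the fixed point property $P^n(h_{\beta}) = h_{\beta}$ and Lemma~\ref{fP}, we get
\[
f_n(x) = P^n((\beta-1)\chi_{I_{\beta}})(x) \geq \frac{\beta-1}{M}\,h_{\beta}(x)
\]
for almost every $x$ and every $n$. Taking liminf in $n$ yields $\underline f(x) \geq \frac{\beta-1}{M} h_{\beta}(x)$ almost everywhere, and integrating gives
\[
\int_{I_{\beta}} \underline f(x)\,dx \geq \frac{\beta-1}{M}\int_{I_{\beta}} h_{\beta}(x)\,dx = \frac{\beta-1}{M} > 0.
\]

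There is no real obstacle here; the argument uses only Fatou's lemma, the monotonicity and linearity of $P$, and the boundedness hypothesis on $h_{\beta}$ (which is exactly what lets us compare $h_{\beta}$ with a constant multiple of $\chi_{I_{\beta}}$). The only minor point to be careful about is that the comparison must be pointwise almost everywhere so that the monotonicity of $P$ can be invoked iteratively, but this is immediate from essential boundedness.
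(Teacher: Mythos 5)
Your proof is correct and is essentially the paper's argument: Fatou's lemma for the upper bound, and the pointwise inequality $f_n \geq \frac{\beta-1}{M}h_{\beta}$ for the lower bound. The only (cosmetic) difference is that you obtain that inequality abstractly from $(\beta-1)\chi_{I_{\beta}} \geq \frac{\beta-1}{M}h_{\beta}$ via monotonicity of $P$ and $P^n(h_{\beta})=h_{\beta}$, whereas the paper expands $h_{\beta}=P^n(h_{\beta})$ as an explicit sum over $\mathcal E^n_{\beta}(x)$ and bounds each term by the sup of $h_{\beta}$ --- the resulting estimate is identical.
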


\begin{proof}
Fatou's lemma gives us that
\[
\int_{I_{\beta}}\underline f(x)dx\leq \liminf_{n\to\infty} \int_{I_{\beta}}f_n(x)dx=1
\]

Now we assume that $\nu_{\beta}$ is absolutely continuous with bounded density $h_{\beta}<C$ for some $C>0$. We suppose that $h_{\beta}(x)>0$, this holds for almost every $x\in I_{\beta}$, see \cite{MauldinSimon}. Applying the operator $P$ $n$ times to $h_{\beta}$ we see that
\begin{eqnarray*}
h_{\beta}(x)&=&\left(\frac{\beta}{2}\right)^n \sum_{x_1,\cdots, x_n\in \mathcal E_{\beta}^n(x)}h_{\beta}(T_{x_n}\circ T_{x_{n-1}}\circ\cdots T_{x_1}(x))\\
&\leq & \left(\frac{\beta}{2}\right)^n \mathcal N_n(x;\beta).C,
\end{eqnarray*}
where we have used that $|\mathcal E_{\beta}^n(x)|=N_n(x;\beta)$ and that $h_{\beta}(T_{x_n}\circ T_{x_{n-1}}\circ\cdots T_{x_1}(x))<C$ for each $x_1,\cdots, x_n\in\{0,1\}^{n}$.

Then rearranging and taking the $\liminf$ we see that 
\[
\liminf_{n\to\infty} P^n(x)=\liminf_{n\to\infty}(\beta-1)\left(\frac{\beta}{2}\right)^n\mathcal N_n(x;\beta)\geq \frac{(\beta-1)h(x)}{C}
\]
as required.
\end{proof}
Lemmas \ref{Pinvariant} and \ref{underlinebound} complete the proof of Theorem \ref{thm2}.

\begin{section}{Counting expansions using the random $\beta$-transformation}\label{RB}
In section \ref{Clustering} we gave results on the growth rate of $\mathcal N_n(x;\beta)$ which hold for almost all $\beta\in(1,2)$. In this section we  take an alternative approach, using the ergodic theory of the random $\beta$-transformation to study the growth rate of $\mathcal N_n(x;\beta)$ for all $\beta\in(1,2)$. 

In \cite{FengSidorov}, Feng and Sidorov proved that for all $\beta\in\left(1,\frac{1+\sqrt{5}}{2}\right)$ there exists a constant $c(\beta)>0$ such that for all $x\in I_{\beta}$
\[
\liminf_{n\rightarrow\infty}\dfrac{\log (\mathcal N_n(x;\beta))}{n}\geq c(\beta).
\]

We extend this result beyond the case that $\beta\in(1,\frac{1+\sqrt{5}}{2})$.

\begin{theorem}\label{betathm}
For every $\beta$ in $(1,2)$, there exists $c(\beta)>0$ such that for almost all $x \in (0,\frac{1}{\beta-1}),$
\[
\liminf_{n\rightarrow\infty}\dfrac{\log (\mathcal N_n(x;\beta))}{n}\geq c(\beta).
\]
\end{theorem}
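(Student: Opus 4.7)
The plan is to exploit the random $\beta$-transformation $K_\beta$ on $\{0,1\}^{\mathbb N} \times I_\beta$ of Dajani and Kraaikamp \cite{DKRandom}, which uses the first symbol of $\omega$ to choose the next digit when the current point $y$ lies in the switch region $S := \left[\tfrac{1}{\beta}, \tfrac{1}{\beta(\beta-1)}\right]$ and otherwise takes the forced digit. For every $\beta \in (1,2)$ the interval $S$ is non-degenerate, and $K_\beta$ is known to admit an ergodic invariant probability measure of product form $\mu = m \times \nu$, where $m$ is the fair Bernoulli measure on $\{0,1\}^{\mathbb N}$ and $\nu$ is equivalent to Lebesgue on $I_\beta$; in particular $\nu(S) > 0$. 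I will set $c(\beta) := \nu(S)\log 2$ and show that this value works.

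The counting step rests on the following identity. Regard $\mathcal N_n(x;\beta)$ as the number of depth-$n$ leaves of the tree whose nodes are points $y \in I_\beta$, where $y$ has two children iff $y \in S$ and one child otherwise. The random walk that picks a child uniformly at every branch (that is, the projection of $K_\beta$ driven by $\omega \sim m$) reaches a leaf $\ell$ with probability $p_\ell = 2^{-k_\ell}$, where $k_\ell$ counts the switch-region visits on the root-to-leaf path. Since $\sum_\ell p_\ell = 1$, we may rewrite
\[
\mathcal N_n(x;\beta) \;=\; \sum_\ell 1 \;=\; \sum_\ell p_\ell \cdot 2^{k_\ell} \;=\; \mathbb E_\omega\!\left[2^{k_n(\omega,x)}\right],
\]
where $k_n(\omega,x) := \#\{0 \le j < n : \pi_2 K_\beta^j(\omega,x) \in S\}$. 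Jensen's inequality for the concave logarithm will then deliver the key lower bound
\[
\log \mathcal N_n(x;\beta) \;\ge\; (\log 2)\cdot\mathbb E_\omega\!\left[k_n(\omega,x)\right].
\]

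To close the argument I will apply Birkhoff's ergodic theorem to $K_\beta$: for $\mu$-a.e.\ $(\omega,x)$, $k_n(\omega,x)/n \to \mu(\{0,1\}^{\mathbb N} \times S) = \nu(S)$. Fubini together with $\nu \sim \mathrm{Leb}$ promotes this to: for Lebesgue-a.e.\ $x$ and $m$-a.e.\ $\omega$, $k_n(\omega,x)/n \to \nu(S)$. Since $k_n/n \in [0,1]$, bounded convergence then yields $\mathbb E_\omega[k_n(\omega,x)]/n \to \nu(S)$, and combining this with the Jensen bound gives
\[
\liminf_{n\to\infty}\frac{\log \mathcal N_n(x;\beta)}{n} \;\ge\; \nu(S)\log 2 \;=\; c(\beta) \;>\; 0
\]
for Lebesgue-a.e.\ $x \in I_\beta$, as required.

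The main obstacle will be twofold. First, the probabilistic identity above replaces the tempting but in general false bound $\mathcal N_n(x;\beta) \ge 2^{k_n(\omega,x)}$ along a single orbit, which fails because flipping one switch-region choice can send all subsequent iterates permanently out of $S$; recognising that averaging over $\omega$ before applying Jensen cures this is the main conceptual move. Second, I will need to extract from the literature on the random $\beta$-transformation the two analytic facts about $\nu$ that drive the Fubini and limit steps: absolute continuity with respect to Lebesgue (so that $\nu(S) > 0$) and the equivalence $\nu \sim \mathrm{Leb}$ (so that Lebesgue-almost-every $x$ is a good point for the ergodic theorem).
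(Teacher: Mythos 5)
Your proposal is correct and follows essentially the same route as the paper: the identity $\mathcal N_n(x;\beta)=\int_{\{0,1\}^{\mathbb N}}2^{h(\omega,x,n)}\,dm$ is exactly the paper's Lemma \ref{hintegral}, and the application of the ergodic theorem to the frequency of visits to the switch region, promoted to Lebesgue-a.e.\ $x$ via Fubini, is the same. The only (harmless) difference is the final extraction of the lower bound --- you use Jensen's inequality plus bounded convergence where the paper restricts the integral to a good set of $\omega$ of measure at least $1-\delta$; both yield the same constant $c(\beta)=\mu_{\beta}(S)\log 2$.
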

In fact the above theorem extends to all non integer $\beta>1$, the proof for $\beta>2$ follows that given below but has more complicated notation, and can be found in the author's thesis. We stress that,  in the case that $\beta\geq\frac{1+\sqrt{5}}{2}$, the almost all $x\in(0,\frac{1}{\beta-1})$ of Theorem \ref{betathm} cannot be extended to hold for all $x\in(0,\frac{1}{\beta-1})$. There are, for example, points $x$ with unique $\beta$-expansion.

In \cite{DKRandom}, Dajani and Kraaikamp defined the random $\beta$-transformation  $K_{\beta}:\{0,1\}^{\mathbb N}\times[0,\frac{1}{\beta-1}]\rightarrow \{0,1\}^{\mathbb N}\times[0,\frac{1}{\beta-1}]$ by
\[
K_{\beta}(\omega,x)=\left\lbrace\begin{array}{c c}(\omega,T_0(x))& x \in [0,\frac{1}{\beta})\\ (\sigma(\omega),T_{\omega_1}(x))& x \in [\frac{1}{\beta},\frac{1}{\beta(\beta-1)}]\\ (\omega,T_1(x)) & x \in (\frac{1}{\beta(\beta-1)},\frac{1}{\beta-1}]\end{array}\right. .
\]
Given a pair $(\omega,x)$, a beta expansion of $x$ is generated by iterating $K_{\beta}(\omega,x)$. If the $nth$ iteration of $K_{\beta}(\omega,x)$ applies $T_0$ to the second coordinate we put $x_n=0$, if it applies $T_1$ to the second coordinate we put $x_n=1$. The sequence $(x_n)$ is a $\beta$-expansion of $x$ \cite{DKRandom}.

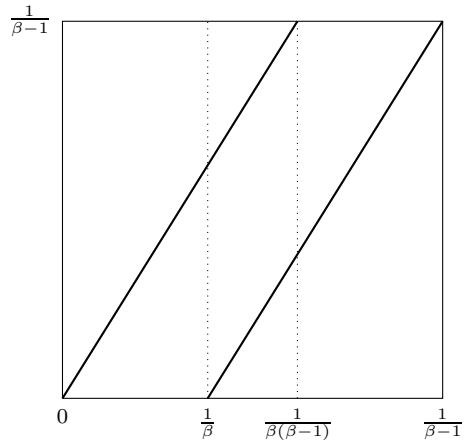
\begin{figure}[h]
\centering
\begin{tikzpicture}[scale=5]
\draw(0,0)node[below]{\scriptsize 0}--(.382,0)node[below]{\scriptsize$\frac{1}{\beta}$}--(.618,0)node[below]{\scriptsize$\frac{1}{\beta(\beta-1)}$}--(1,0)node[below]{\scriptsize$\frac{1}{\beta-1}$}--(1,1)--(0,1)node[left]{\scriptsize$\frac{1}{\beta-1}$}--(0,.5)--(0,0);
\draw[dotted](.382,0)--(.382,1)(0.618,0)--(0.618,1);
\draw[thick](0,0)--(0.618,1)(.382,0)--(1,1);
\end{tikzpicture}\caption{The projection onto the first coordinate of $K_{\beta}$ for $\beta=\dfrac{1+\sqrt{5}}{2}$}
\end{figure}

This allows the study of all $\beta$-expansions of $x$, each different choice of $\omega$ corresponds to a different $\beta$-expansion of $x$, up to a set of measure zero, and all $\beta$-expansions are given this way. We refer to the $\beta$-expansion $(x_n)_{n=1}^{\infty}$ obtained by iterating $K_{\beta}(\omega,x)$ as the $\beta$-expansion of $x$ generated by $\omega$.

We define the switch region $S:= [\frac{1}{\beta},\frac{1}{\beta(\beta-1)}]$. When the orbit of $(\omega,x)$ enters $\hat S:=\Omega\times S$ we are allowed a choice about how to continue the $\beta$-expansion, this choice is made by looking at the first digit of the sequence $\omega$. We define the hiting number
\[
h(\omega,x,n):=\#\{i \in \{1,\cdots, n\}: K_{\beta}^i(\omega,x) \in \hat S\}.
\]
We note that the dependence of $h(\omega,x,n)$ on the sequence $\omega$ is really just a dependence on the finite word $\omega_1,\cdots,\omega_{h(\omega,x,n)}$, since $\omega$ only influences the orbit of $x$ when $K^n_{\beta}(\omega,x)$ enters the switch region $\hat S$. We see that the $\beta$-expansions of $x$ generated by $\omega$ and $\omega'$ agree to the first $n$ places if and only if $\omega_1,\cdots,\omega_{h(x,\omega,n)}=\omega'_1,\cdots,\omega'_{h(x,\omega',n)}$.

In \cite{DdV}, Dajani and de Vries showed that $K_{\beta}$ has invariant probability measure $\hat{\mu}_{\beta}=\mu_{\beta}\times m$, where $\mu_{\beta}$ is absolutely continuous with respect to $\lambda$ (Lebesgue measure), and $m$ is the $(\frac{1}{2},\frac{1}{2})$ Bernoulli measure on $\{0,1\}^{\mathbb N}$. They also showed that $K_{\beta}$ is ergodic with respect to this measure.
 
We begin by describing $\mathcal N_n(x;\beta)$ in terms of $h(\omega,x,n)$ and $m$.
\begin{lemma}\label{hintegral}
$\mathcal N_n(x;\beta)=\int_{\{0,1\}^{\mathbb N}}2^{h(\omega,x,n)}dm$
\end{lemma}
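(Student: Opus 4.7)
The plan is to partition the space $\{0,1\}^{\mathbb N}$ according to which length-$n$ initial segment of a $\beta$-expansion of $x$ the sequence $\omega$ generates, and then to evaluate the integral by summing over the cells of this partition. For each $(x_1,\ldots,x_n)\in\mathcal E_\beta^n(x)$, set
\[
A(x_1\cdots x_n):=\{\omega\in\{0,1\}^{\mathbb N}:\text{the $\beta$-expansion of $x$ generated by $\omega$ begins with }x_1,\ldots,x_n\}.
\]
Since every $\omega$ generates some $\beta$-expansion of $x$ and every admissible word extends to a full expansion generated by some $\omega$, the sets $A(x_1\cdots x_n)$ form a partition of $\{0,1\}^{\mathbb N}$ (up to boundary) indexed by $\mathcal E_\beta^n(x)$.

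The central claim is that on $A(x_1\cdots x_n)$ the hitting number $h(\omega,x,n)$ takes a common deterministic value $h_0=h_0(x_1\cdots x_n,x)$, and that $A(x_1\cdots x_n)$ is precisely the cylinder of $\omega$-sequences whose first $h_0$ coordinates are the unique string prescribed by $(x_1,\ldots,x_n)$. I would prove this by induction on $n$, using the three cases in the definition of $K_\beta$: at iterations where the second coordinate of $K_\beta^i(\omega,x)$ lies outside the switch region $S$, the next digit of the expansion is forced and no coordinate of $\omega$ is consumed; at iterations where it lies in $S$, the next digit equals $\omega_1$ of the current $\omega$, so fixing the digit fixes the next unused coordinate of $\omega$. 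Thus prescribing $x_1,\ldots,x_n$ pins down exactly $h_0$ coordinates of $\omega$ and leaves the rest free, which also shows that $h(\omega,x,n)=h_0$ throughout $A(x_1\cdots x_n)$ (this is implicit in the remark made just before the lemma).

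Given the claim, the Bernoulli $(\tfrac12,\tfrac12)$ measure of each cylinder is $m(A(x_1\cdots x_n))=2^{-h_0}$, hence
\[
\int_{\{0,1\}^{\mathbb N}} 2^{h(\omega,x,n)}\,dm = \sum_{(x_1,\ldots,x_n)\in\mathcal E_\beta^n(x)} 2^{h_0}\cdot 2^{-h_0} = |\mathcal E_\beta^n(x)| = \mathcal N_n(x;\beta).
\]

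The one step requiring care is the cylinder description of $A(x_1\cdots x_n)$ and the accompanying constancy of $h(\omega,x,n)$ on it; the rest of the argument is just combinatorial bookkeeping. I expect this inductive step to be the main (and only) obstacle, with the potential subtlety being a boundary-indexing issue between the step at which a switch occurs and the step at which the digit is recorded, which has to be handled consistently with the definition of $h(\omega,x,n)$ given in the paper.
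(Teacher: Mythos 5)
Your proof is correct and follows essentially the same route as the paper: the paper also identifies each prefix $(x_1,\ldots,x_n)\in\mathcal E_\beta^n(x)$ with the cylinder $[\omega_1,\ldots,\omega_{h(\omega,x,n)}]$ of $m$-measure $2^{-h}$, and evaluates the integral so that each cylinder contributes $2^{h}\cdot 2^{-h}=1$ (the paper merely organizes the sum by the value $k=h(\omega,x,n)$ rather than directly over $\mathcal E_\beta^n(x)$). The constancy and cylinder-structure point you flag as the main obstacle is exactly what the paper disposes of in the remark preceding the lemma, so no gap remains.
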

\begin{proof}
The set $\mathcal E^n_{\beta}(x)$ of $n$-prefixes of $\beta$-expansions of $x$ corresponds to the number of different words $\omega_1,\cdots,\omega_{h(\omega,x,n)}$ for $\omega \in \{0,1\}^{\mathbb N}$. So defining
\[
\Omega(x,n):=\{\omega_1,\cdots,\omega_{h(x,\omega,n)}: \omega \in \{0,1\}^{\mathbb N}\},
\]
we have $|\Omega(x,n)|=\mathcal N_n(x;\beta)$.

We see that $m[\omega_1,\cdots,\omega_{h(\omega,x,n)}]=2^{-h(\omega,x,n)}$. Then
\begin{eqnarray*}
|\Omega(x,n)|&=&\sum_{k=1}^n |\{\omega_1,\cdots,\omega_{h(\omega,x,n)}: \omega \in \{0,1\}^{\mathbb N}, h(\omega,x,n)=k\}|\\
&=&\sum_{k=1}^n 2^{k}m\{\omega \in \{0,1\}^{\mathbb N}: h(\omega,x,n)=k\}\\
&=&\int_{\{0,1\}^{\mathbb N}}2^{h(\omega,x,n)}dm
\end{eqnarray*}
as required.\end{proof}

From this point the proof of Theorem \ref{betathm} is straightforward. By the ergodic theorem we have that for almost every  $(\omega,x)$ (with respect to $\hat{\mu}_{\beta}$) in $[0,\frac{1}{\beta-1}]\times\{0,1\}^{\mathbb N}$,
\[
\lim_{n\rightarrow\infty}\dfrac{h(\omega,x,n)}{n}= \hat{\mu}_{\beta}(\hat S)=\mu_{\beta}(S).
\]

Decomposing $\hat{\mu}_{\beta}=m\times\mu_{\beta}$, and recalling that $\mu_{\beta}$ is absolutely continuous with respect to $\lambda$, we get that for almost every $x$ (w.r.t. $\lambda$), for almost every $\omega$ (w.r.t. $m$),
\[
\lim_{n\rightarrow\infty}\dfrac{h(\omega,x,n)}{n}= \mu_{\beta}(S).
\]
Then, since almost everywhere convergence implies convergence in probability, we have that for almost every $x$ and for all $\epsilon,\delta>0$ there exists a constant $N_{\epsilon \delta}$ such that for all $n>N_{\epsilon \delta}$,
\[
m\left(\{\omega \in \{0,1\}^{\mathbb N} : \left|\dfrac{h(\omega,x,n)}{n}- \mu_{\beta}(S)\right|\geq\epsilon\}\right) <\delta.
\]
We define the good set
\begin{eqnarray*}
G(n,x,\epsilon)&=&\{\omega \in \{0,1\}^{\mathbb N} : \left|\dfrac{h(\omega,x,n)}{n}- \mu_{\beta}(S)\right|<\epsilon\}\\
&=&\{\omega \in \{0,1\}^{\mathbb N} : n(\mu_{\beta}(S)-\epsilon)< h(\omega,x,n)< n(\mu_{\beta}(S)+\epsilon)\}.
\end{eqnarray*}

Now
\[
m(G(n,x,\epsilon)) >1-\delta,
\]
and so for almost every $x$,
\begin{eqnarray*}
\int_{\{0,1\}^{\mathbb N}} 2^{h(\omega,x,n)}dm&\geq&\int_{G(n,x,\epsilon)} 2^{h(\omega,x,n)}dm\\
&\geq& (1-\delta)2^{n(\mu_{\beta}(S)-\epsilon)}.
\end{eqnarray*}
Then

\[
\mathcal N_n(x;\beta) \geq (1-\delta)(2^{n(\mu_{\beta}(S)-\epsilon)}),
\]

and since $\epsilon$ and $\delta$ were arbitrary, we have
\[
\liminf_{n\rightarrow\infty} \dfrac{\log (\mathcal N_n(x;\beta))}{n}\geq \log(2)\mu_{\beta}(S).
\]
This completes the proof of Theorem \ref{betathm}. 

The constant $c(\beta)=\log(2)\mu_{\beta}(S)$ can be computed exactly using using a formula for the density of $\mu_{\beta}$ which will appear in forthcoming work by the author. This formula is reminiscent of the formula of Parry for the invariant density of the greedy $\beta$-transformation, see \cite{Parry}. Unfortunately, the lower bound $\log(2)\mu_{\beta}(S)$ on the growth rate of $\mathcal N_n(x;\beta)$ is not sharp.

\end{section}
\section{Further Questions}\label{Conjectures}
There are some further questions that arise naturally. The first is whether the absolute continuity of $\nu_{\beta}$ is equivalent to the convergence of $f_n$ to a the density of $\nu_{\beta}$. We make the following conjecture.
\begin{conjecture}
If $\nu_{\beta}$ is singular then $\lim_{n\to\infty}f_n(x)=0$ almost everywhere. If $\nu_{\beta}$ is absolutely continuous then $\lim_{n\to\infty}f_n(x)$ exists almost everywhere and is equal to the density of $\nu_{\beta}$.
\end{conjecture}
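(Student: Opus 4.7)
The singular case admits a clean argument. The proof of Lemma \ref{overlineupperbound} actually establishes, without invoking absolute continuity, the pointwise inequality
$$f_n(x) \;\leq\; 2 \cdot \frac{\nu_\beta([x - r_n, x + r_n])}{2 r_n},\qquad r_n = \frac{1}{(\beta-1)\beta^n},$$
via weak convergence of $\nu_{\beta,m}$ to $\nu_\beta$ on the closed interval. Taking $\limsup$ in $n$, the right-hand side is bounded by twice the upper symmetric derivative of $\nu_\beta$ with respect to Lebesgue measure; by Lebesgue's differentiation theorem for general finite Borel measures, this upper derivative vanishes Lebesgue-almost everywhere whenever $\nu_\beta$ is singular. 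Hence $\overline f = 0$ almost everywhere, and together with $0 \leq \underline f \leq \overline f$ this forces $\lim f_n = 0$ almost everywhere, settling the first half of the conjecture.

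For the absolutely continuous case, Lemmas \ref{overlinebound}, \ref{overlineupperbound} and \ref{Pinvariant} already give $\overline f = c\, h_\beta$ almost everywhere for some $c \in (0, 2]$, while Fatou's lemma combined with Lemma \ref{Pinvariant} gives either $\underline f \equiv 0$ or $\underline f = c'\, h_\beta$ almost everywhere for some $c' \in (0, 1]$. Since $h_\beta > 0$ almost everywhere (see \cite{MauldinSimon}) and $\underline f \leq \overline f$, we have $c' \leq c$; the conjecture is therefore equivalent to $c = c' = 1$. A natural two-sided plan is: (i) sharpen Lemma \ref{overlineupperbound} to $\overline f \leq h_\beta$, yielding $c \leq 1$; (ii) establish the dual bound $\underline f \geq h_\beta$ by an analogous but asymmetric argument, yielding $c' \geq 1$. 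Combined with $c' \leq c$ these force $c = c' = 1$, whence $f_n \to h_\beta$ almost everywhere.

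The principal obstacle is the factor $2$ in step (i). It is not merely an artefact of the proof: length-$m$ extensions of partial sums in $[x - r_n, x]$ genuinely occupy the full interval $[x - r_n, x + r_n]$, and naive disjointification of the covering intervals of length $r_n$ does not reduce this factor. One possible workaround is to aggregate the partial sums by their position in $[x - r_n, x]$ at the finer scale $r_{n+k}$, apply a one-dimensional Besicovitch covering to the resulting boxes, and pass to the limit $k \to \infty$, trading the factor $2$ for a factor tending to $1$ at Lebesgue density points of $h_\beta$; a symmetric argument restricted to extensions by $0$'s would then yield (ii). An alternative route that bypasses the geometric refinement is operator-theoretic: prove that $P$ is exact in the Lasota--Mackey sense whenever $\nu_\beta$ is absolutely continuous, which would give $f_n \to h_\beta$ in $L^1$. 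Note that the inequality above furnishes the pointwise domination $f_n(x) \leq 2 M\nu_\beta(x)$ by twice the Hardy--Littlewood maximal function of $\nu_\beta$; when $h_\beta$ is bounded this gives $f_n \leq 2\|h_\beta\|_\infty$ uniformly, so exactness together with dominated convergence along a subsequence would immediately pin down $\lim f_n = h_\beta$ almost everywhere. I expect parameters $\beta$ for which $h_\beta$ is unbounded to be the hardest regime for either route, since equi-integrability of $\{f_n\}$ is then not freely available and the covering refinement implicitly relies on local boundedness of $h_\beta$.
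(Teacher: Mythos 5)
First, a point of context: the statement you are proving is presented in the paper as an open conjecture, with no proof supplied, so your argument has to stand entirely on its own. Your treatment of the singular half does stand, and as far as I can check it is complete and correct. The inequality $\nu_{\beta,m}\left[x-r_n,x+r_n\right]\geq \mathcal N_n(x;\beta)/2^n$ extracted from the proof of Lemma \ref{overlineupperbound} uses nothing about $\nu_{\beta}$ beyond its definition; letting $m\to\infty$ and using the portmanteau upper bound for closed sets gives $f_n(x)\leq \nu_{\beta}([x-r_n,x+r_n])/r_n$ pointwise, and the Besicovitch--Lebesgue differentiation theorem for finite Borel measures says that the symmetric derivative of a singular measure vanishes Lebesgue-almost everywhere, so $\overline f=0$ a.e.\ and hence $f_n\to 0$ a.e. Note that this also sharpens Theorem \ref{thm1}: it eliminates the a priori possibility that $\int_{I_{\beta}}\overline f=\infty$ for singular $\nu_{\beta}$, showing that singularity is equivalent to $\overline f=0$ a.e.

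The absolutely continuous half, however, is not proved; it is a programme. You correctly reduce the claim to showing $c=c'=1$ in $\overline f=c\,h_{\beta}$ and $\underline f=c'\,h_{\beta}$ (with $c'=0$ allowed a priori), but neither step (i) nor step (ii) is carried out. The Besicovitch-at-density-points refinement is only described, and the central difficulty --- that the mass of $\nu_{\beta,m}$ in $[x-r_n,x+r_n]$ is generated by prefixes landing in a one-sided window $[x-r_n,x]$ whose extensions then spread over a two-sided window, so the factor $2$ is not removable by a symmetry or disjointification argument alone --- is acknowledged rather than resolved. The suggestion to ``prove $P$ is exact'' is a restatement of (a strengthening of) the problem rather than an argument, and even granted exactness you would only obtain $\|f_n-h_{\beta}\|_1\to 0$, which yields almost everywhere convergence only along a subsequence; upgrading to the full sequence needs additional input (monotonicity or a maximal inequality) that you do not supply. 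Finally, the degenerate case $\int_{I_{\beta}}\underline f=0$, which the paper only excludes when $h_{\beta}$ is bounded (Lemma \ref{underlinebound}), is not ruled out here. So the second half of the conjecture remains open in your write-up, and you should present the singular case as a theorem and the rest as a discussion of strategy.
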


Bernoulli convolutions are often studied via the sequence of measures $\nu_{\beta,n}$, which converge weakly to $\nu_{\beta}$. One could also define a sequence of measures $m_{\beta,n}$ to be the probability measures with density $f_n$. These measures are the measures obtained by letting $m_{\beta,0}$ be normalised Lebesgue measure on $I_{\beta}$ and by defining
\[
m_{\beta,n+1}=\frac{1}{2}\left(m_{\beta,n}\circ T_0 + m_{\beta,n}\circ T_1\right).
\]
It is easy to see that the sequence $(m_{\beta,n})$ converges weak$^*$ to $\nu_{\beta}$. A proof of the above conjecture would show that if $m_{\beta}$ is absolutely continuous then $(m_{\beta,n})$ converges in the stronger sense that the densities converge almost everywhere to $h_{\beta}$. This would be useful in determining, for example, the multifractal properties of $\nu_{\beta}$.

Our second question is whether one can use the fact that $K_{\beta}$ is a Markov map when $\beta$ is a Salem number to calculate $\mathcal N_n(x;\beta)$. Pisot numbers are the only known examples of $\beta$ for which $\nu_{\beta}$ is singular, but Salem numbers are widely regarded as the most likely candidates for non-Pisot values of $\beta$ which may yield singular Bernoulli convolutions. Feng and Sidorov calculated the growth of $\mathcal N_n(x;\beta)$ for Pisot values of $\beta$, if one were able to extend their methods to include Salem numbers it would allow one to determine whether Salem numbers have absolutely continuous Bernoulli convolutions or not.

Finally, we ask whether it is possible to improve our proof of Theorem \ref{betathm} in order to get precise lower bounds on $\mathcal N_n(x;\beta)$ using the $K_{\beta}$ map. Lemma \ref{hintegral} describes $\mathcal N_n(x;\beta)$ accurately in terms of $h(\omega,x,n)$, but the ergodic theory that we use in our subsequent analysis of $h(\omega,x,n)$ is not strong enough to give a sharp lower bound. Perhaps a more delicate analysis of the ergodic theory of $K_{\beta}$ could yield better understanding of the growth of $\mathcal N_n(x;\beta)$.


\section*{Acknowledgements}
We'd like to thank the many mathematicians with whom we've discussed this work, and in particular Mark Pollicott and Karma Dajani. This work was supported by the Dutch Organisation for Scientific Research (NWO) grant number 613.001.022.


\bibliographystyle{plain} 
\bibliography{betaref.bib}

\begin{thebibliography}{10}

\bibitem{DdV}
Karma Dajani and Martijn de~Vries.
\newblock Invariant densities for random {$\beta$}-expansions.
\newblock {\em J. Eur. Math. Soc. (JEMS)}, 9(1):157--176, 2007.

\bibitem{DKRandom}
Karma Dajani and Cor Kraaikamp.
\newblock Random {$\beta$}-expansions.
\newblock {\em Ergodic Theory Dynam. Systems}, 23(2):461--479, 2003.

\bibitem{EJH}
P.~Erd{\H{o}}s, M.~Horv{\'a}th, and I.~Jo{\'o}.
\newblock On the uniqueness of the expansions {$1=\sum q^{-n_i}$}.
\newblock {\em Acta Math. Hungar.}, 58(3-4):333--342, 1991.

\bibitem{ErdosJooMultiplicity}
P.~Erd{\H{o}}s and I.~Jo{\'o}.
\newblock On the number of expansions {$1=\sum q^{-n_i}$}.
\newblock {\em Ann. Univ. Sci. Budapest. E\"otv\"os Sect. Math.}, 35:129--132,
  1992.

\bibitem{ErdosPisot}
Paul Erd{\H{o}}s.
\newblock On a family of symmetric {B}ernoulli convolutions.
\newblock {\em Amer. J. Math.}, 61:974--976, 1939.

\bibitem{FengSidorov}
De-Jun Feng and Nikita Sidorov.
\newblock Growth rate for beta-expansions.
\newblock {\em Monatsh. Math.}, 162(1):41--60, 2011.

\bibitem{FurstenbergKatznelson}
Hillel Furstenberg and Yitzhak Katznelson.
\newblock Eigenmeasures, equidistribution, and the multiplicity of
  {$\beta$}-expansions.
\newblock In {\em Fractal geometry and applications: a jubilee of {B}eno\^\i t
  {M}andelbrot. {P}art 1}, volume~72 of {\em Proc. Sympos. Pure Math.}, pages
  97--116. Amer. Math. Soc., Providence, RI, 2004.

\bibitem{GarsiaEntropy}
Adriano~M. Garsia.
\newblock Entropy and singularity of infinite convolutions.
\newblock {\em Pacific J. Math.}, 13:1159--1169, 1963.

\bibitem{MauldinSimon}
R.~Daniel Mauldin and K{\'a}roly Simon.
\newblock The equivalence of some {B}ernoulli convolutions to {L}ebesgue
  measure.
\newblock {\em Proc. Amer. Math. Soc.}, 126(9):2733--2736, 1998.

\bibitem{Parry}
W.~Parry.
\newblock On the {$\beta $}-expansions of real numbers.
\newblock {\em Acta Math. Acad. Sci. Hungar.}, 11:401--416, 1960.

\bibitem{SolomyakSixty}
Yuval Peres, Wilhelm Schlag, and Boris Solomyak.
\newblock Sixty years of {B}ernoulli convolutions.
\newblock In {\em Fractal geometry and stochastics, {II}
  ({G}reifswald/{K}oserow, 1998)}, volume~46 of {\em Progr. Probab.}, pages
  39--65. Birkh\"auser, Basel, 2000.

\bibitem{PollicottWeissDimensions}
Mark Pollicott and Howard Weiss.
\newblock The dimensions of some self-affine limit sets in the plane and
  hyperbolic sets.
\newblock {\em J. Statist. Phys.}, 77(3-4):841--866, 1994.

\bibitem{Renyi}
A.~R{\'e}nyi.
\newblock Representations for real numbers and their ergodic properties.
\newblock {\em Acta Math. Acad. Sci. Hungar}, 8:477--493, 1957.

\bibitem{Sidorov1}
Nikita Sidorov.
\newblock Almost every number has a continuum of {$\beta$}-expansions.
\newblock {\em Amer. Math. Monthly}, 110(9):838--842, 2003.

\bibitem{SolomyakAC}
Boris Solomyak.
\newblock On the random series {$\sum\pm\lambda^n$} (an {E}rd{\H o}s problem).
\newblock {\em Ann. of Math. (2)}, 142(3):611--625, 1995.

\end{thebibliography}

\end{document}